\setlist[enumerate]{label=(\alph*)}
\newcommand{\Z}{\mathbb{Z}}
\newcommand{\F}{\mathbb{F}}
\newcommand{\Hom}{\operatorname{Hom}}
\newcommand{\tr}{\operatorname{tr}}
\newcommand{\inv}{^{-1}}
\newcommand{\End}{\operatorname{End}}
\newcommand{\Res}{\operatorname{Res}}
\newcommand{\Ind}{\operatorname{Ind}}
\newcommand{\id}{\operatorname{id}}
\newcommand{\stab}{\operatorname{stab}}
\newcommand{\Syl}{\operatorname{Syl}}
\newcommand{\triv}{\mathbf{triv}}
\newcommand{\calC}{\mathcal{C}}
\newcommand{\calL}{\mathcal{L}}
\newcommand{\calO}{\mathcal{O}}
\newcommand{\Gal}{\operatorname{Gal}}
\newcommand{\Stilde}{\tilde{S}}
\newcommand{\etilde}{\tilde{e}}
\newcommand{\btilde}{\tilde{b}}
\newcommand{\Vtilde}{\tilde{V}}
\newcommand{\ftilde}{\tilde{f}}
\newcommand{\ctilde}{\tilde{c}}
\newcommand{\Ttilde}{\tilde{T}}
\newcommand{\catmod}{\mathbf{mod}}
\newtheorem{theorem}{Theorem}
\newtheorem{lemma}[theorem]{Lemma}
\newtheorem{prop}[theorem]{Proposition}
\newtheorem{corollary}[theorem]{Corollary}
\newtheorem*{theorem*}{Theorem}
\theoremstyle{remark}
\theoremstyle{definition}
\begin{document}
    \title{Galois descent of splendid Rickard equivalences for blocks of $p$-nilpotent groups}
    \author{Sam K. Miller}
    \address{Department of Mathematics, University of California Santa Cruz, Santa Cruz, CA 95064} 
    \email{sakmille@ucsc.edu} 
    \subjclass[2010]{20C20, 20J05, 20C05, 20F19, 18G35} 
    \keywords{Splendid Rickard equivalence, Galois descent, block, $p$-nilpotent groups, abelian defect group conjecture} 
    \begin{abstract}
        We strengthen results of Boltje and Yilmaz regarding Galois descent of equivalences of blocks of $p$-nilpotent groups, and a result of Kessar and Linckelmann regarding Galois descent of splendid Rickard equivalences for blocks with compatible Galois stabilizers. A more general descent criteria for chain complexes is proven along the way, which requires the adaptation of a theorem of Reiner for chain complexes. This verifies Kessar and Linckelmann's refinement of Brou\'{e}'s abelian defect group conjecture for blocks of $p$-nilpotent groups with abelian Sylow $p$-subgroup.   
    \end{abstract}
    
    \maketitle
    
    Recently, there has been new interest in verifying Brou\'{e}'s abelian defect group conjecture over \textit{arbitrary fields}, in particular over $\F_p$, rather than the standard assumption that one works over a splitting field. This interest stems from a refined version of the conjecture proposed by Kessar and Linckelmann in \cite{KL18}, which predicts that for \textit{any} complete discrete valuation ring $\calO$ and any block of a finite group $G$ over $\calO$ with abelian defect group, there is a splendid Rickard equivalence between the block algebra and its Brauer correspondent. Given a $p$-modular system $(K, \calO, k)$, one may use unique lifting of splendid Rickard equivalences from $k$ to $\calO$ to reformulate the strengthened conjecture over fields of positive characteristic. Brou\'{e}'s conjecture is known over $\F_p$ in some instances, such as in the case of symmetric groups \cite{CR08}. In most cases however, Brou\'{e}'s conjecture has only been verified under the more common assumption that the field $k$ is a splitting field for the group $G$.
    
    Recent papers such as \cite{H23}, \cite{HLZ23}, and \cite{KL18} have verified Kessar and Linckelmann's strengthened conjecture for cases in which Brou\'{e}'s conjecture was already known to hold. One common approach to verify the conjecture over $\F_p$ is to take a previously known construction of a splendid Rickard equivalence and modify it such that it descends to the base field $\F_p$ from a splitting field extension $k/ \F_p$. Brou\'{e}'s conjecture is known to hold for $p$-nilpotent groups, that is, finite groups whose largest $p'$-normal subgroup $N$ is a compliment to a Sylow $p$-subgroup of $G$ (and more generally holds for $p$-solvable groups due to \cite{HL00}). In \cite{BY22}, Boltje and Yilmaz determined two cases for when equivalences of block algebras over $\F_p$ of $p$-nilpotent groups exist by modifying previously known techniques. However, in one of the two cases, \cite[Theorem B]{BY22}, only a weaker form of equivalence known as a $p$-permutation equivalence (as introduced by Boltje and Xu in \cite{BX07}) was shown. In this paper, we improve their result by demonstrating the existence of splendid Rickard equivalences in the same setting.
    
    \begin{theorem}{(\cite[Theorem B]{BY22} for splendid Rickard equivalences)}\label{thm:thmbgen}
        Let $G$ be a $p$-nilpotent group with abelian Sylow $p$-subgroup and let $\tilde{b}$ be a block idempotent of $\F_pG$. Then there exists a splendid Rickard equivalence between $\F_pG\tilde{b}$ and its Brauer correspondent block algebra. In particular, Kessar and Linckelmann's strengthened abelian defect group conjecture holds for blocks of $p$-nilpotent groups with abelian Sylow $p$-subgroups.
    \end{theorem}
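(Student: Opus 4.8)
The plan is to transport a splendid Rickard equivalence over a splitting field down to $\F_p$ by means of the chain-complex analogue of Reiner's theorem. Fix a finite Galois splitting field $k/\F_p$ for $G$ and for all subgroups occurring below, set $\Gamma = \Gal(k/\F_p)$, and write $\tilde b = \sum_{\sigma \in \Gamma/\Gamma_b}{}^{\sigma}b$ for the decomposition of $\tilde b$ into the $\Gamma$-orbit of a block $b$ of $kG$, where $\Gamma_b = \stab_\Gamma(b)$. Write $G = N \rtimes P$ with $N = O_{p'}(G)$ and $P$ abelian, let $Q \le P$ be a defect group of $b$, let $c$ be the block of $kN_G(Q)$ in Brauer correspondence with $b$, and let $\tilde c$ be the Brauer correspondent of $\tilde b$, a block of $\F_p N_G(Q)$. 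The key structural input is group-theoretic: for any $p$-subgroup $R$ of $G$, which we may take inside $P$, an element $n \in N$ normalizing $R$ must centralize it, since $nxn^{-1}x^{-1} \in N \cap R = 1$ for all $x \in R$; hence $N_G(R) = C_N(R)\rtimes P = C_G(R)$ (as $P$ is abelian), so $N_G(R)/C_G(R) = 1$. Thus every block of $kG$ has trivial inertial quotient and is nilpotent; the same holds for $N_G(Q) = C_N(Q)\rtimes P$, which is $p$-nilpotent with abelian Sylow $p$-subgroup $P$, so $c$ is nilpotent with defect group $Q$.

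By Puig's structure theory of nilpotent blocks, $kGb$ and $kN_G(Q)c$ are each splendidly Morita equivalent to $kQ$ through a $p$-permutation bimodule with twisted diagonal vertex $\Delta Q$, and these bimodules may be chosen with a common endopermutation source; composing them yields a splendid Morita equivalence between $kGb$ and $kN_G(Q)c$, i.e.\ a splendid Rickard equivalence whose complex $M$ is a single $p$-permutation $(kGb, kN_G(Q)c)$-bimodule in degree $0$. (Alternatively one could quote the splendid Rickard equivalence known for blocks of $p$-solvable groups.) I would then verify that this equivalence is Galois-stable: the $p$-permutation bimodule realizing Puig's equivalence $kGb \sim kQ$ is unique up to isomorphism, so $\sigma \in \Gamma$ carries $M$ to the analogous bimodule for $({}^{\sigma}b, {}^{\sigma}c)$; and as $\operatorname{Br}_Q$ and block idempotents commute with the coefficient action of $\Gamma$, the block ${}^{\sigma}c$ is the Brauer correspondent of ${}^{\sigma}b$. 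Consequently $b$ and $c$ have compatible Galois stabilizers, $\Gamma_b = \Gamma_c$, and $M$ is $\Gamma_b$-stable.

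Next I would apply the chain-complex descent criterion, whose hypothesis is precisely the $\Gamma_b$-stability of $M$ just established: because the endomorphism algebras involved are, modulo their Jacobson radicals, matrix algebras over finite fields — and so carry no cohomological obstruction — $M$ descends to a $p$-permutation bimodule $\overline M$ over the fixed field $\ell = k^{\Gamma_b}$, inducing a splendid Rickard equivalence between $\ell Gb$ and $\ell N_G(Q)c$. Finally, the Galois theory of block algebras identifies $\ell Gb$ with $\F_p G\tilde b$ and $\ell N_G(Q)c$ with $\F_p N_G(Q)\tilde c$ as $\F_p$-algebras — via the decomposition $k \otimes_{\F_p}\F_p G\tilde b \cong \prod_{\sigma \in \Gamma/\Gamma_b}kG{}^{\sigma}b$ — compatibly with the $p$-permutation structure, so transporting $\overline M$ across these identifications produces the desired splendid Rickard equivalence between $\F_p G\tilde b$ and its Brauer correspondent. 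Unique lifting of splendid Rickard equivalences then yields the statement over $\Z_p$, so that Kessar and Linckelmann's refinement of Brou\'{e}'s conjecture holds for these blocks.

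The step I expect to be the main obstacle is the descent itself — arranging that the equivalence over $k$ is $\Gamma_b$-stable \emph{as a complex}, and that its descent remains a splendid Rickard equivalence between exactly $\F_p G\tilde b$ and its Brauer correspondent. This is where the naive form of Reiner's theorem is insufficient and where its chain-complex refinement, together with the $\Gamma$-equivariance of Brauer correspondence (hence the equality $\Gamma_b = \Gamma_c$), is essential. By comparison, reducing to the split, nilpotent case and exhibiting the degree-$0$ equivalence over $k$ is routine given Puig's theorem, and the final passage to the blocks over $\F_p$ is standard Fong–Reynolds-type bookkeeping.
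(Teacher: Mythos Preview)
Your approach has a genuine gap at the point where you invoke Puig's theorem. The Morita equivalence between a nilpotent block $kGb$ and $kQ$ furnished by Puig's structure theorem is realised by a bimodule with vertex $\Delta Q$ and \emph{endopermutation} source, not in general trivial source; so it is not a $p$-permutation bimodule, and the phrase ``splendidly Morita equivalent to $kQ$ through a $p$-permutation bimodule'' is already incorrect. More seriously, the two sources cannot be arranged to coincide. You correctly observe that $N_G(Q)=C_G(Q)$, so $Q$ is central in $N_G(Q)$; consequently $Q$ acts trivially on the simple $kTf$-module $U$, and the Puig bimodule on the Brauer-correspondent side has \emph{trivial} source (indeed the paper records that $\Ind^{T\times Q}_{\Delta_Q T}U$ already gives a splendid Morita equivalence with $kQ$). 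On the $kGb$ side, however, the source is (a cap of) $\Res^S_Q V$, which is a nontrivial endopermutation $kQ$-module in general. The composite bimodule therefore inherits source $\Res^S_Q V$, not a permutation module, and there is no splendid Morita equivalence between $kGb$ and $kN_G(Q)c$ concentrated in degree~$0$.

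This is precisely why the paper, following Rickard and the setup of \cite{BY22}, replaces $V$ by an endosplit $p$-permutation resolution $Y_V$: the complex $\Ind^{S\times Q}_{\Delta_Q S}Y_V$ is the splendid substitute for the non-splendid Morita bimodule. Your parenthetical alternative --- quote the splendid Rickard equivalence for $p$-solvable groups from \cite{HL00} --- amounts to the same construction, but then Galois stability can no longer be read off from uniqueness of a single bimodule: one must show that the \emph{complex} $Z$ satisfies $\stab_\Gamma(Z)=\stab_\Gamma(e')$, and this is the content of \cite[Proposition~5.13 and Lemma~5.14]{BY22}, which first arranges the resolution $X_V$ itself to be defined over $\F_p$ and then tracks stabilisers through the entire construction. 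The descent machinery and the final identification of $\ell Gb$ with $\F_pG\tilde b$ that you sketch are correct and match the paper's Theorem~\ref{thm:thmdgen} together with \cite[Theorem~6.5(b)]{KL18}, but the input equivalence you propose to feed into them is not available in the degree-$0$ form you claim.
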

    
    In fact, Theorem B follows from \cite[Corollary 5.15]{BY22}, which we also prove an analogue of. The key tool for proving these theorems is a splendid Rickard equivalence-theoretic analogue of \cite[Theorem D]{BY22}. This theorem also strengthens a descent result of Kessar and Linckelmann, \cite[Theorem 6.5(b)]{KL18}, by allowing for descent from larger finite fields, rather than only the field of realization of a block. 
    
    \begin{theorem}{(\cite[Theorem D]{BY22} for splendid Rickard equivalences)}\label{thm:thmdgen}
        Let $b$ and $c$ be block idempotents of $kG$ and $kH$. Let $\tilde{b}$ and $\tilde{c}$ denote the corresponding block idempotents of $\F_pG$ and $\F_pH$ associated to $b$ and $c$ respectively, i.e. the unique block idempotents of $\F_pG$ and $\F_pH$ for which $b\tilde{b}\neq 0$ and $c\tilde{c} \neq 0$. Set $\Gamma := \Gal(k/\F_p)$. Then explicitly, \[\tilde{b} = \tr_\Gamma(b) = \sum_{\sigma \in \Gamma/\stab_\Gamma(b)} {}^\sigma b, \text{ and } \tilde{c} = \tr_\Gamma(c) = \sum_{\sigma \in \Gamma/\stab_\Gamma(c)} {}^\sigma c.\]
        
        Let $X$ be a splendid Rickard equivalence between $kGb$ and $kHc$. Suppose we have $\stab_\Gamma(X) = \stab_\Gamma(b) = \stab_\Gamma(c)$. Then $\F_pG\tilde{b}$ and $\F_pH\tilde{c}$ are splendidly equivalent.
    \end{theorem}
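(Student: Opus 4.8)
Write $H_1 := \stab_\Gamma(b) = \stab_\Gamma(c) = \stab_\Gamma(X)$ and let $k_1 := k^{H_1}$, so that $H_1 = \Gal(k/k_1)$ and $\Gamma/H_1 \cong \Gal(k_1/\F_p)$; the plan is to descend $X$ in two stages, first from $k$ to $k_1$ and then from $k_1$ to $\F_p$. First I record the routine facts that, since $b$ and $c$ are fixed by $\Gal(k/k_1)$, they already lie in $k_1G$ and $k_1H$ and remain block idempotents there, and that the displayed formula for $\tilde b$ is precisely the primitive idempotent decomposition $\tilde b = \sum_{\sigma \in \Gal(k_1/\F_p)} {}^\sigma b$ in $Z(k_1G)$; consequently $k_1G\tilde b = \prod_{\sigma \in \Gal(k_1/\F_p)} k_1G\,{}^\sigma b$, and likewise $k_1H\tilde c = \prod_\sigma k_1H\,{}^\sigma c$.

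The hypothesis $\stab_\Gamma(X) = H_1$ says exactly that $X$ is $\Gal(k/k_1)$-stable, and the first — and only substantial — stage is to descend it. For this I would invoke the chain-complex descent criterion, namely the adaptation to bounded complexes of $p$-permutation bimodules of Reiner's theorem that the preceding section develops; it yields a splendid Rickard complex $X_1$ of $(k_1Gb, k_1Hc)$-bimodules with $k \otimes_{k_1} X_1 \cong X$ as complexes of bimodules. The point that makes descent possible over $k_1$ is that the relevant obstruction, of Brauer-group type, vanishes because finite fields have trivial Brauer group; the real content of the criterion is to carry this out compatibly with the differentials and without leaving the class of $p$-permutation complexes with twisted diagonal vertices. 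Granting $X_1$, the fact that it is a splendid Rickard complex inducing an equivalence between $k_1Gb$ and $k_1Hc$ is then automatic, since the $p$-permutation and vertex conditions and the homotopy equivalences $X_1 \otimes_{k_1Hc} X_1^* \simeq k_1Gb$ and $X_1^* \otimes_{k_1Gb} X_1 \simeq k_1Hc$ are all detected after the faithfully flat base change $-\otimes_{k_1}k$.

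For the second stage I restrict scalars along $\F_p \hookrightarrow k_1$. Each term of $X_1$ is a left $k_1Gb$-module, so $\tilde b = \sum_\sigma {}^\sigma b \in \F_pG$ acts on it as the identity and $\tilde c$ acts as the identity on the right; hence $X_1$, viewed as a complex of $\F_p(G\times H)$-modules, is canonically a complex $\tilde X$ of $(\F_pG\tilde b, \F_pH\tilde c)$-bimodules. The isomorphism $k_1 \otimes_{\F_p} k_1 \cong \prod_{\tau \in \Gal(k_1/\F_p)} k_1$ then identifies
\[
k_1 \otimes_{\F_p} \tilde X \;\cong\; \bigoplus_{\tau \in \Gal(k_1/\F_p)} {}^\tau X_1
\]
as complexes of $(k_1G\tilde b, k_1H\tilde c)$-bimodules. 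Galois twisting preserves $p$-permutation modules together with their vertices and commutes with $\otimes$ and with $(-)^*$, so each ${}^\tau X_1$ is a splendid Rickard complex between $k_1G\,{}^\tau b$ and $k_1H\,{}^\tau c$; therefore the right-hand side above is a splendid Rickard complex inducing an equivalence between $\prod_\tau k_1G\,{}^\tau b = k_1G\tilde b$ and $\prod_\tau k_1H\,{}^\tau c = k_1H\tilde c$.

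It remains to descend this last statement from $k_1$ to $\F_p$, which is routine. A finitely generated $\F_p(G\times H)$-module is a $p$-permutation module with all indecomposable summands having twisted diagonal vertices precisely when this holds after $-\otimes_{\F_p}k_1$ (by Krull--Schmidt, the classification of indecomposable $p$-permutation modules over $p$-groups, and the usual behaviour of vertices under field extension), so the terms of $\tilde X$ lie in the required class; and the mapping cone of the canonical map $\tilde X \otimes_{\F_pH\tilde c} \tilde X^* \to \F_pG\tilde b$ becomes contractible after $-\otimes_{\F_p}k_1$ by the previous paragraph, while contractibility of a bounded complex of finitely generated modules — the solvability of $ds+sd=\id$ in the associated $\Hom$-complex — is preserved and reflected by the faithfully flat extension $k_1/\F_p$, and symmetrically for $\tilde X^* \otimes_{\F_pG\tilde b} \tilde X \to \F_pH\tilde c$. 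Hence $\tilde X$ is a splendid Rickard complex inducing an equivalence between $\F_pG\tilde b$ and $\F_pH\tilde c$. The crux is the first stage: descending $X$, differentials and $p$-permutation structure included, from $k$ to $k_1$; everything after that is bookkeeping with restriction and extension of scalars.
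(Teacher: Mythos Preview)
Your proposal is correct and follows the same two-stage strategy as the paper: first descend $X$ from $k$ to the intermediate field $k_1=k^{\stab_\Gamma(b)}=\F_p[b]=\F_p[c]$ using the Galois-stability descent criterion for chain complexes (the paper's Corollary~\ref{cor:descent}), then pass from $k_1$ to $\F_p$. The only difference is in the second stage: the paper lifts the descended complex to $\calO'=W(k_1)$ and then invokes \cite[Theorem~6.5(b)]{KL18} as a black box, whereas you stay over fields and argue directly by restricting scalars along $\F_p\hookrightarrow k_1$, using $k_1\otimes_{\F_p}\tilde X\cong\bigoplus_\tau{}^\tau X_1$ and the fact that the $p$-permutation/vertex conditions and the Rickard homotopy equivalences are detected after the faithfully flat extension $k_1/\F_p$. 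This is essentially a field-level reproof of the cited Kessar--Linckelmann result, so the two arguments have the same content; your version is a little more self-contained, while the paper's passage through $\calO'$ lets it quote \cite[Proposition~4.5(a)]{KL18} and \cite[Lemmas~5.1,~5.2]{KL18} verbatim rather than reprove the descent of the Rickard and splendidness conditions.
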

    
    \textbf{Notation and conventions.}
    For this paper, we follow the notation and setup of \cite{KL18}. Let $(K, \calO, k) \subseteq (K', \calO', k')$ be an extension of $p$-modular systems, i.e. a $p$-modular system such that $\calO \subseteq \calO'$ with $J(\calO) \subseteq J(\calO')$. Furthermore, assume $k$ and $k'$ are finite, and that $\calO$ and $\calO'$ are absolutely unramified unless otherwise specified. In other words, we have $\calO \cong W(k)$ and $\calO' \cong W(k')$, where $W(k)$ denotes the ring of Witt vectors over $k$. Equivalently, we have $J(\calO) = p\calO$ and $J(\calO') = p\calO'$. Set $d = [k' : k]$. Then $\calO'$ is free of rank $d$ as an $\calO$-module. Let $\sigma: k' \to k'$ be a generator of $\Gamma := \Gal(k'/k)$ (such as the Frobenius endomorphism) and denote by the same letter $\sigma: \calO' \to \calO'$ the unique ring homomorphism of $\calO'$ lifting $\sigma$. This lift is unique since $\calO$ and $\calO'$ are absolutely unramified. 
    
    We use the following notation for Galois twists for extensions of commutative rings $\calO \subseteq \calO'$. Given a finitely generated $\calO$-algebra $A$, a module $U$ over the $\calO'$-algebra $A' = \calO' \otimes_\calO A$, and a ring automorphism $\sigma$ of $\calO'$ which restricts to the identity map on $\calO$, we denote by ${}^\sigma U$ the $A'$-module which is equal to $U$ as a module over the subalgebra $1 \otimes A$ of $A'$, such that $\lambda \otimes a$ acts on $U$ as $\sigma\inv(\lambda) \otimes a$ for all $a \in A$, $\lambda \in \calO'$. The Galois twist induces an $\calO$-linear (but not in general $\calO'$-linear) self-equivalence on ${}_{A'}\catmod$. The same notation extends to chain complexes in the obvious way. Note the Krull-Schmidt theorem for chain complexes holds over $A$, see \cite[Theorem 4.6.11]{L181}.
    
    We obtain functors \[-_{\calO}: {}_{A'}\catmod \to {}_{A}\catmod \text{ and } \calO' \otimes_\calO -: {}_{A}\catmod \to {}_{A'}\catmod,\] restriction and extension of scalars, respectively. These are both $\calO$-linear exact functors. Moreover, $\calO'\otimes_\calO -$ is left adjoint to $ -_\calO$. We also have restriction and extension of scalars for $k$-algebras, and the same adjunction holds. Moreover, these functors extend to functors over chain complex categories, and the same adjunctions hold. 
    
    Let $\tau: {}_{A'}\catmod \to {}_{A'}\catmod$ be the functor sending an $A'$-module $U$ to the $A'$-module \[\tau(U) := \bigoplus_{i = 0}^{d-1} {}^{\sigma^i}U,\] and a morphism $f$ to \[\tau(f) := (f,\dots, f).\]
    $\tau$ is an exact functor of $\calO$-linear categories, where we regard ${}_{A'}\catmod$ as an $\calO$-linear category by restriction of scalars. 
    
    \begin{prop}{\cite[Proposition 6.3]{KL18}}
        With the notation and assumptions above, the functors $\calO' \otimes_\calO (-)_\calO$ and $\tau$ are naturally isomorphic. That is, for any $A'$-module $U$, we have a natural isomorphism \[\calO' \otimes_\calO U_\calO \cong \tau(U).\]
    \end{prop}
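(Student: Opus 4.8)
The plan is to write down an explicit natural transformation and then reduce the one non-formal ingredient to Dedekind's lemma on linear independence of characters. Concretely, for an $A'$-module $U$ I would define $\Phi_U\colon \calO'\otimes_\calO U_\calO \to \tau(U)=\bigoplus_{i=0}^{d-1}{}^{\sigma^i}U$ on simple tensors by $\Phi_U(\lambda\otimes u)=\bigl(\sigma^{-i}(\lambda)\cdot u\bigr)_{i=0}^{d-1}$, where $\sigma^{-i}(\lambda)\cdot u$ denotes the action of the ring element $\sigma^{-i}(\lambda)\in\calO'$ on $u$ through the intrinsic $\calO'$-module structure of $U$ (i.e. through the first tensor factor of $A'=\calO'\otimes_\calO A$). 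First I would check that $\Phi_U$ is balanced over $\calO$, which is immediate from $\sigma|_\calO=\id_\calO$ since $\sigma^{-i}(\lambda a)=\sigma^{-i}(\lambda)a$ for $a\in\calO$. Next I would verify $A'$-linearity: writing a general element of $A'$ as $\mu\otimes b$ with $\mu\in\calO'$, $b\in A$ and using that $\calO'$ is central in $A'$, one computes $\Phi_U\bigl((\mu\otimes b)(\lambda\otimes u)\bigr)=\bigl(\sigma^{-i}(\mu)\sigma^{-i}(\lambda)\,b\,u\bigr)_i$, while on ${}^{\sigma^i}U$ the element $\mu\otimes b$ acts by $v\mapsto\sigma^{-i}(\mu)(b\,v)$, so the two sides agree coordinatewise. (This is precisely why the twist $\sigma^{-i}$, rather than $\sigma^{i}$, must appear in the formula so as to match the convention defining ${}^{\sigma^i}U$; using $\sigma^{i}$ one lands in $\bigoplus_i{}^{\sigma^{-i}}U$, which is the same direct sum after relabeling.) Finally, for an $A'$-homomorphism $f\colon U\to V$ one has $\Phi_V\circ(\id_{\calO'}\otimes f_\calO)=\tau(f)\circ\Phi_U$ because $f$ is $\calO'$-linear and $\tau(f)=(f,\dots,f)$; hence $\Phi$ is natural.

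It remains to prove $\Phi_U$ is bijective. I would fix an $\calO$-basis $\omega_0,\dots,\omega_{d-1}$ of $\calO'$ (which exists, $\calO'$ being free of rank $d$ over $\calO$). Every element of $\calO'\otimes_\calO U_\calO$ is uniquely of the form $\sum_j\omega_j\otimes u_j$ with $u_j\in U$, and under the induced identifications $\calO'\otimes_\calO U_\calO\cong U^d\cong\tau(U)$ of abelian groups the map $\Phi_U$ becomes left multiplication by the matrix $M:=\bigl(\sigma^{-i}(\omega_j)\bigr)_{0\le i,j\le d-1}\in\Mat_d(\calO')$. Thus it suffices to show $M\in\GL_d(\calO')$. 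Since $\calO'$ is local with maximal ideal $p\calO'$ and residue field $\calO'/p\calO'\cong k'$, this holds if and only if the reduction $\bar M=\bigl(\sigma^{-i}(\bar\omega_j)\bigr)_{i,j}\in\Mat_d(k')$ is invertible, where $\bar\omega_0,\dots,\bar\omega_{d-1}$ is the resulting $k$-basis of $k'$. Now I would invoke Dedekind's lemma: the automorphisms $\sigma^{-i}$, $i=0,\dots,d-1$, are exactly the $d$ distinct elements of $\Gamma=\Gal(k'/k)$, hence are linearly independent over $k'$ as maps $k'\to k'$. If $\bar M$ were singular there would be a nonzero row vector $(c_i)_i\in (k')^d$ with $\sum_i c_i\sigma^{-i}(\bar\omega_j)=0$ for all $j$; since the $\bar\omega_j$ span $k'$ over $k$ and each $\sigma^{-i}$ is $k$-linear, this forces $\sum_i c_i\sigma^{-i}=0$ identically, a contradiction. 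Hence $\bar M$, and therefore $M$, is invertible, so $\Phi_U$ is an isomorphism of $A'$-modules, natural in $U$.

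The argument is essentially bookkeeping around one genuine input, so I expect the main obstacle to be notational: pinning down the direction of the Galois twist so that $\Phi_U$ is honestly $A'$-linear, and keeping straight that the $\calO'$-action used to define the twists ${}^{\sigma^i}(-)$ and the one appearing in the formula for $\Phi_U$ are both the action through the first factor of $A'=\calO'\otimes_\calO A$. The only substantive ingredient, invertibility of the Vandermonde-type matrix of character values, is supplied by Dedekind independence together with the fact that invertibility over the complete local ring $\calO'$ is detected on the residue field $k'$; equivalently, one may phrase this step as the classical isomorphism $\calO'\otimes_\calO\calO'\xrightarrow{\sim}\prod_{i=0}^{d-1}\calO'$, $\lambda\otimes\mu\mapsto(\lambda\sigma^i(\mu))_i$, for the Galois extension $\calO'/\calO$, and then tensor with $U$ over $\calO'$. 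The same map works verbatim degreewise, giving the stated extension to chain complexes.
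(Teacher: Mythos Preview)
Your proof is correct. Note, however, that the paper does not supply its own proof of this proposition: it is quoted verbatim from \cite[Proposition~6.3]{KL18} and used as a black box, so there is no in-paper argument to compare against. Your explicit map $\Phi_U(\lambda\otimes u)=(\sigma^{-i}(\lambda)u)_i$, together with the reduction of bijectivity to invertibility of the matrix $(\sigma^{-i}(\omega_j))_{i,j}$ via Dedekind independence at the residue field, is the standard way this is established (and is essentially how Kessar and Linckelmann argue as well). The care you take with the direction of the twist so that $\Phi_U$ is genuinely $A'$-linear is exactly the point that needs attention; your alternative formulation via the ring isomorphism $\calO'\otimes_\calO\calO'\cong\prod_i\calO'$ followed by $-\otimes_{\calO'}U$ is a clean way to package the same computation.
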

    
    It easily follows that the above statement holds as well for $k', k$ replacing $\calO', \calO$. We next require an adaptation of a theorem of Reiner for chain complexes. This next lemma is an extension of \cite[Proposition 15]{R66} for chain complexes.
    
    \begin{lemma}
        Let $C \in Ch^b({}_A\catmod)$, and set $E(C) := \End_A(C)$ and $\tilde{E}(C) := E(C)/J(E(C))$. Each decomposition of $C' := \calO' \otimes_\calO C$ into a direct sum of indecomposable subcomplexes \[C' = \bigoplus_{i=1}^l D_j\] induces a corresponding decomposition of $k'\otimes_k \tilde{E}(C)$ into indecomposable left ideals, \[k' \otimes_k \tilde{E}(C) = \bigoplus_{i=1}^l L_j,\] where $D_j$ and $L_j$ correspond via the idempotent $\tau_j \in E(C')$ for which $\tau_j(C') = D_j$. Moreover, for all $i, j$ we have $D_i \cong D_j$ as $A$-complexes if and only if $L_i \cong L_j$ as left $k'\otimes_k \tilde{E}(C)$-modules.
    \end{lemma}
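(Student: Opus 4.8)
The plan is to carry Reiner's argument \cite[Proposition 15]{R66} up one categorical level: translate direct-sum decompositions of $C'$ into decompositions of $E(C') := \End_{A'}(C')$ into orthogonal primitive idempotents, pass to the semisimple quotient $E(C')/J(E(C'))$, and identify the latter with $k' \otimes_k \tilde E(C)$, so that such a decomposition becomes the asserted decomposition of $k' \otimes_k \tilde E(C)$ into indecomposable left ideals. The two ring isomorphisms that make this work are
\[ E(C') \;\cong\; \calO' \otimes_\calO E(C) \qquad \text{and} \qquad E(C')/J(E(C')) \;\cong\; k' \otimes_k \tilde E(C), \]
and I would establish these first.

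For the first, I would use that $\calO'$ is free of finite rank $d$ over $\calO$ and that the terms of $C$ are finitely generated $A$-modules, so the natural map $\calO' \otimes_\calO \Hom_A(C_m, C_n) \to \Hom_{A'}(\calO' \otimes_\calO C_m, \calO' \otimes_\calO C_n)$ is an isomorphism for all $m, n$; since a chain endomorphism of $C$ is exactly an element of the kernel of the $\calO$-linear map $\bigoplus_n \Hom_A(C_n, C_n) \to \bigoplus_n \Hom_A(C_n, C_{n-1})$ sending $(f_n)$ to $(d_n f_n - f_{n-1} d_n)$, and $\calO' \otimes_\calO -$ is exact, one obtains $E(C') \cong \calO' \otimes_\calO E(C)$ as $\calO'$-algebras. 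For the second, I would note that $E(C)$ is module-finite over $\calO$, so $p E(C) \subseteq J(E(C))$ (as $p \in J(\calO)$) and $\tilde E(C) = E(C)/J(E(C))$ is a finite-dimensional semisimple $k$-algebra, and that $k' \otimes_k \tilde E(C)$ is again semisimple since $k'/k$ is separable ($k$, $k'$ being finite fields). Then $\calO' \otimes_\calO J(E(C))$ is a two-sided ideal of $E(C') \cong \calO' \otimes_\calO E(C)$ containing $p E(C')$, with nilpotent image $k' \otimes_k \bigl(J(E(C))/pE(C)\bigr)$ in $E(C')/pE(C') \cong k' \otimes_k \bigl(E(C)/pE(C)\bigr)$; since $pE(C') \subseteq J(E(C'))$ too, this ideal lies in $J(E(C'))$, whereas the quotient $E(C')/\bigl(\calO' \otimes_\calO J(E(C))\bigr) \cong \calO' \otimes_\calO \tilde E(C) = k' \otimes_k \tilde E(C)$ (using $p \tilde E(C) = 0$) is semisimple. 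An ideal contained in the radical with semisimple quotient is the whole radical, so $J(E(C')) = \calO' \otimes_\calO J(E(C))$ and $E(C')/J(E(C')) \cong k' \otimes_k \tilde E(C)$. This radical computation is, I expect, the one genuinely substantive step — it is where the module-finiteness of $E(C)$ over $\calO$ and the separability of $k'/k$ actually enter.

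Granting these, the remaining steps are formal. Since idempotents split in $Ch^b({}_{A'}\catmod)$ and this category is Krull--Schmidt \cite[Theorem 4.6.11]{L181}, a decomposition $C' = \bigoplus_{j=1}^l D_j$ into indecomposable subcomplexes amounts to the same thing as a decomposition of the identity of $E(C')$ into orthogonal primitive idempotents $\tau_1, \dots, \tau_l$ with $\tau_j(C') = D_j$. As $E(C')$ is module-finite over the complete local ring $\calO'$, hence semiperfect, idempotents, their primitivity, and their conjugacy all pass faithfully between $E(C')$ and $E(C')/J(E(C'))$; therefore the images $\bar\tau_j$ form a decomposition of the identity of $k' \otimes_k \tilde E(C)$ into orthogonal primitive idempotents, yielding $k' \otimes_k \tilde E(C) = \bigoplus_{j=1}^l L_j$ with $L_j := (k' \otimes_k \tilde E(C))\bar\tau_j$ a decomposition into indecomposable left ideals (in fact simple, since $k' \otimes_k \tilde E(C)$ is semisimple), matched to the $D_j$ through the $\tau_j$.

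For the final equivalence I would chain together
\[ D_i \cong D_j \iff \tau_i \sim \tau_j \text{ in } E(C') \iff \bar\tau_i \sim \bar\tau_j \text{ in } k' \otimes_k \tilde E(C) \iff L_i \cong L_j, \]
where $\sim$ denotes conjugacy by a unit. The first equivalence uses Krull--Schmidt cancellation: an isomorphism $D_i \cong D_j$ together with the resulting isomorphism $(1 - \tau_i)(C') \cong (1 - \tau_j)(C')$ of the complementary summands assembles into an automorphism of $C'$ conjugating $\tau_i$ to $\tau_j$, and the converse is immediate. The second uses that units of $k' \otimes_k \tilde E(C)$ lift to units of $E(C')$ and that two idempotents of $E(C')$ congruent modulo $J(E(C'))$ are conjugate by a unit lying in $1 + J(E(C'))$. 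The third is the standard bijection between conjugacy classes of idempotents and isomorphism classes of the left ideals they generate, valid here because $k' \otimes_k \tilde E(C)$ is semisimple. Everything outside the radical computation above is thus routine bookkeeping with idempotents and the Krull--Schmidt theorem.
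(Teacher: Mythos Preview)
Your proof is correct and follows essentially the same approach as the paper: establish $E(C') \cong \calO' \otimes_\calO E(C)$, identify an ideal of $E(C')$ contained in the radical whose quotient is $k' \otimes_k \tilde E(C)$, and then translate the Krull--Schmidt decomposition of $C'$ through idempotent lifting. Your radical computation is in fact slightly sharper than the paper's (you show $J(E(C')) = \calO' \otimes_\calO J(E(C))$ rather than merely the containment $I \subseteq J(E(C'))$ that the paper asserts without justification), and your kernel/exactness argument for $E(C') \cong \calO' \otimes_\calO E(C)$ is a clean alternative to the paper's adjunction-plus-explicit-map approach.
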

    \begin{proof}
        First, note that for any chain complexes $C,D \in Ch^b({}_{A}\catmod)$, we have \[\Hom_{A'}(C', D') \cong \Hom_A(C, D'_\calO) \cong \calO' \otimes_\calO \Hom_A(C,D),\] where the first isomorphism arises from adjunction, and the second isomorphism is given by
        \begin{align*}
            \calO' \otimes_\calO \Hom_A(C,D) & \to \Hom_A(C, D'_\calO)\\
            a \otimes f &\mapsto \big(m_i \mapsto a \otimes f_i(m_i) \big)_{i \in \Z}
        \end{align*}
        where $a\in \calO'$ and $f = \{f_i:C_i \to D_i\}_{i\in \Z}$ is a chain complex homomorphism $C \to D$ (the author thanks D. Benson for his suggestion \cite{471654}). One may verify this is a well-defined homomorphism for chain complexes. Moreover, the homomorphism is surjective, as $\Hom_A(C,D'_\calO)$ is spanned by homomorphisms of the form $m \mapsto a \otimes f_i(m)$ in degree $i$, where $a \in \calO'$ and $f = \{f_i:C_i \to D_i\}_{i\in \Z}$ is a chain complex homomorphism $C \to D$. This follows from the identification $D'_\calO \cong D^{\oplus d}$ of $A$-chain complexes, which implies $\Hom_A(C,D'_\calO) \cong \Hom_A(C,D)^{\oplus d}$. The surjection is therefore injective as well, by rank counting. Moreover if $C = D$, the composite of the two isomorphisms is an $\calO'$-algebra isomorphism, as in this case the composite is as follows: 
        \begin{align*}
            \calO'\otimes_\calO \End_A(C) &\to \End_{A'}(C')\\
            a\otimes f &\mapsto m_a \otimes f
        \end{align*}
        where $m_a$ is the multiplication by $a$ map. Therefore, we have an isomorphism $E(C') = \End_{A'}(C') \cong \calO' \otimes_\calO E(C)$ of $\calO'$-algebras. Now, choose \[I = p\calO' \otimes_\calO E(C) + \calO' \otimes_\calO J(E(C)),\] then $I$ is a two-sided ideal of $\calO'\otimes_\calO E(C)$ contained in $J(\calO'\otimes_\calO E(C))$. We may also regard $I$ as a two-sided ideal of $E(C')$ via the isomorphism $E(C') \cong \calO'\otimes_\calO E(C).$ Since $I$ is contained in $J(E(C'))$, $E(C')$ decomposes into left ideals in the same way as the factor ring $E(C')/I$. However, \[E(C')/I \cong (\calO'\otimes_\calO E(C))/I \cong k' \otimes_k \tilde{E}(C).\] 
        
        Now, \cite[Corollary 4.6.12]{L181} asserts that a decomposition $C' = \bigoplus_{i=1}^l D_j$ into indecomposable summands corresponds bijectively to a decomposition of $\id \in E(C')$ into primitive idempotents $\tau_i$ with $\tau_i(C') = D_i$ and $\tau_i$ is $E(C')$-conjugate to $\tau_j$ if and only if $D_i \cong D_j$. Therefore, we obtain a corresponding decomposition of $E(C') = \bigoplus_{i=1}^l K_j$ into indecomposable left ideals, with $K_i =  E(C') \tau_i$, $E(K_i) \cong \tau_i E(C') \tau_i$, and moreover, $D_i \cong D_j$ if and only if $K_i \cong K_j$ as $E(C')$-modules. On the other hand, since $I \subseteq J(E(C'))$, the decomposition $E(C') = \bigoplus_{i=1}^l K_j$ also corresponds bijectively to a decomposition $E(C')/I = \bigoplus_{i=1}^l L_j$ into indecomposable left ideals, with $L_i \cong L_j$ if and only if $K_i \cong K_j$. The result follows. 
        
    \end{proof}
    
    The following proposition is an extension of \cite[Theorem 3]{R66} for chain complexes.
    
    \begin{prop}\label{thm:reinergen}
        For each indecomposable chain complex of finitely generated $A$-modules $C$, the chain complex of $A'$-modules $C' := \calO'\otimes_\calO C$ is a direct sum of nonisomorphic indecomposable subcomplexes.
    \end{prop}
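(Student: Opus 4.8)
The plan is to reduce the statement to the structure of the finite-dimensional $k$-algebra $\tilde E(C) := E(C)/J(E(C))$ and then invoke the preceding Lemma. Since $C$ is indecomposable, the Krull--Schmidt theorem for $Ch^b({}_A\catmod)$ guarantees that $E(C) = \End_A(C)$ is a local ring; as $A$ is module-finite over the complete discrete valuation ring $\calO$, each $\Hom_A(C_i, C_j)$ is a finitely generated $\calO$-module, so $E(C)$ is a finitely generated $\calO$-module, and hence $\tilde E(C)$ is a division ring which is finite-dimensional over $k$. Because $k$ is finite, $\tilde E(C)$ is a finite division ring, so by Wedderburn's little theorem it is a \emph{field}, a finite extension of $k$ of some degree $m$.

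The second step is to identify $R := k'\otimes_k\tilde E(C)$. It is a tensor product of two finite field extensions of $k$, and finite extensions of finite fields are separable, so $R$ is a split finite \'etale $k'$-algebra; explicitly it is isomorphic to a finite direct product of copies of a single finite field, namely the compositum $F$ of $k'$ and $\tilde E(C)$ (with the number of factors equal to $\gcd([k':k],m)$). In particular $R$ is a commutative semisimple ring, so, up to order, it has a unique decomposition $R = \bigoplus_{j=1}^{g} L_j$ into indecomposable (left) ideals, the $L_j$ being its $g$ minimal ideals; and any two of these minimal ideals are non-isomorphic as $R$-modules, since $L_j$ has annihilator $\bigoplus_{i\ne j}L_i$ and these annihilators are pairwise distinct --- even though all the $L_j$ are abstractly isomorphic to $F$ as rings.

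Finally I would apply the Lemma to a chosen decomposition $C' = \bigoplus_{i=1}^{l} D_i$ of $C'$ into indecomposable subcomplexes: it yields a corresponding decomposition $R = \bigoplus_{i=1}^{l} L_i$ into indecomposable left ideals with $D_i\cong D_j$ as $A$-complexes if and only if $L_i\cong L_j$ as $R$-modules. By the previous step we must have $l = g$ and the $L_i$ are exactly the $g$ minimal ideals of $R$, which are pairwise non-isomorphic; hence the $D_i$ are pairwise non-isomorphic, which is the claim.

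I do not expect a serious obstacle beyond the Lemma itself: all the real content is in recognizing that $\tilde E(C)$ is a \emph{commutative} (finite) field --- this is precisely where finiteness of the residue field $k$ is indispensable, since for an infinite $k$ one could have $\tilde E(C)$ a noncommutative division algebra and then $k'\otimes_k\tilde E(C)$ could acquire repeated, or matrix-algebra, simple factors, making the conclusion fail --- and then reading off the multiplicity-free module structure of the split \'etale algebra $k'\otimes_k\tilde E(C)$.
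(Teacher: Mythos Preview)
Your proposal is correct and follows essentially the same route as the paper: show that $\tilde E(C)$ is a finite field via indecomposability of $C$ and Wedderburn's little theorem, deduce that $k'\otimes_k \tilde E(C)$ is a commutative semisimple algebra whose indecomposable ideals are pairwise nonisomorphic, and then invoke the preceding Lemma. You supply more detail (the explicit structure of $R$ as a product of $\gcd([k':k],m)$ copies of the compositum, the annihilator argument for nonisomorphism, and the remark on why finiteness of $k$ is essential), but the architecture of the argument is the same.
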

    
    \begin{proof}
        We use the notation from the previous lemma. Note $\tilde{E}(C)$ is semisimple, but since $C$ is indecomposable, $\tilde{E}(C)$ is a division algebra over $k$. However, $k$ is finite; hence $\tilde{E}(C)$ is as well, and thus a field by Wedderburn's theorem. Then $k' \otimes_k \tilde{E}(C)$ is a semisimple commutative algebra by \cite[Theorem 7.8]{CR81}. Therefore, it is a direct sum of fields, none of which are isomorphic as $k'\otimes_k \tilde{E}(C)$-modules. The result now follows from the previous lemma. 
    \end{proof}

    The following theorem demonstrates the necessary and sufficient condition for a chain complex of $A'$-modules to descend to a chain complex of $A$-modules is Galois stability. 
    
    \begin{theorem}\label{thm:galoisstablelift}
        \begin{enumerate}
            \item Suppose $C \in Ch^b({}_{A'}\catmod)$ is indecomposable and satisfies ${}^\sigma C \cong C$ for all $\sigma \in\Gal(k'/k)$, where we regard $\sigma$ as the unique ring homomorphism of $\calO'$ lifting $\sigma \in\Gal(k'/k)$. Then there exists a chain complex $\tilde{C} \in Ch^b({}_{A}\catmod)$ such that $\calO' \otimes_\calO \tilde{C} \cong C$. Moreover, $\tilde{C}$ is unique up to isomorphism.
            \item Conversely, let $\tilde{C} \in Ch^b({}_{A}\catmod)$ and define $C := \calO' \otimes_\calO \tilde{C}$. Then $C$ satisfies ${}^\sigma C \cong C$ for all $\sigma \in\Gal(k'/k)$.
        \end{enumerate}  
    \end{theorem}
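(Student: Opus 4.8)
I would handle the easy implication (b) first and then concentrate on (a). For (b), write $\tilde C=(\tilde C_i,\tilde d_i)_{i\in\Z}$ and $C:=\calO'\otimes_\calO\tilde C$, so $C_i=\calO'\otimes_\calO\tilde C_i$ with differential $\id_{\calO'}\otimes\tilde d_i$. Given $\sigma\in\Gal(k'/k)$, define $\calO$-linear maps $\phi_i\colon({}^\sigma C)_i\to C_i$ by $\phi_i(\lambda\otimes m)=\sigma(\lambda)\otimes m$, and then verify directly that $\phi=(\phi_i)_{i\in\Z}$ is an isomorphism of $A'$-complexes. It commutes with the differentials because these act only on the second tensor factor, and it intertwines the two $A'$-module structures since $\lambda\otimes a$ acts on ${}^\sigma C$ as $\sigma\inv(\lambda)\otimes a$, so that $\phi_i\big((\lambda\otimes a)(\mu\otimes m)\big)=\phi_i\big(\sigma\inv(\lambda)\mu\otimes am\big)=\lambda\sigma(\mu)\otimes am=(\lambda\otimes a)\phi_i(\mu\otimes m)$; it is bijective with degreewise inverse $\nu\otimes m\mapsto\sigma\inv(\nu)\otimes m$. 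This gives ${}^\sigma C\cong C$ for all $\sigma$.

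For (a), set $\tilde C_0:=(C)_\calO\in Ch^b({}_A\catmod)$. First I would pass to the $\tau$-functor: by \cite[Proposition 6.3]{KL18}, $\calO'\otimes_\calO\tilde C_0=\calO'\otimes_\calO(C)_\calO\cong\tau(C)=\bigoplus_{i=0}^{d-1}{}^{\sigma^i}C$, and since $\sigma$ generates $\Gal(k'/k)$ and ${}^{\sigma^i}C\cong C$ for all $i$ by hypothesis, we obtain $\calO'\otimes_\calO\tilde C_0\cong C^{\oplus d}$. Next, decompose $\tilde C_0\cong\bigoplus_t D_t^{\oplus a_t}$ into pairwise non-isomorphic indecomposable $A$-complexes, using the Krull--Schmidt theorem for $Ch^b({}_A\catmod)$ \cite[Theorem 4.6.11]{L181}. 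The pivotal observation is that for $t\neq t'$ the $A'$-complexes $\calO'\otimes_\calO D_t$ and $\calO'\otimes_\calO D_{t'}$ have no indecomposable direct summand in common: if $E$ is an indecomposable $A'$-summand of $\calO'\otimes_\calO D_t$, then $(E)_\calO$ is an $A$-summand of $(\calO'\otimes_\calO D_t)_\calO\cong D_t^{\oplus d}$, so by Krull--Schmidt every indecomposable $A$-summand of the nonzero complex $(E)_\calO$ is isomorphic to $D_t$; if $E$ were also a summand of $\calO'\otimes_\calO D_{t'}$, the same reasoning would force $D_t\cong D_{t'}$.

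Combining this with Proposition~\ref{thm:reinergen}, which gives $\calO'\otimes_\calO D_t\cong\bigoplus_s E_{t,s}$ with the $E_{t,s}$ pairwise non-isomorphic for each fixed $t$, it follows that the $E_{t,s}$ are pairwise non-isomorphic across all indices and that $\calO'\otimes_\calO\tilde C_0\cong\bigoplus_{t,s}E_{t,s}^{\oplus a_t}$. Since also $\calO'\otimes_\calO\tilde C_0\cong C^{\oplus d}$, every indecomposable summand of $C$ occurs among the $E_{t,s}$, say $C\cong\bigoplus_{t,s}E_{t,s}^{\oplus b_{t,s}}$, and comparing multiplicities via Krull--Schmidt in $\bigoplus_{t,s}E_{t,s}^{\oplus a_t}\cong\bigoplus_{t,s}E_{t,s}^{\oplus d\,b_{t,s}}$ forces $a_t=d\,b_{t,s}$ for every $s$. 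In particular $d\mid a_t$ and $b_{t,s}=a_t/d$ is independent of $s$, so $\tilde C:=\bigoplus_t D_t^{\oplus(a_t/d)}$ is a direct summand of $\tilde C_0$, hence lies in $Ch^b({}_A\catmod)$, and $\calO'\otimes_\calO\tilde C\cong\bigoplus_{t,s}E_{t,s}^{\oplus(a_t/d)}\cong C$. For uniqueness, if $\calO'\otimes_\calO\tilde C\cong\calO'\otimes_\calO\tilde C''$ then applying $(-)_\calO$ and using $(\calO'\otimes_\calO M)_\calO\cong M^{\oplus d}$ gives $\tilde C^{\oplus d}\cong\tilde C''^{\oplus d}$, so $\tilde C\cong\tilde C''$ by Krull--Schmidt.

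The step I expect to be the main obstacle is establishing $d\mid a_t$: this is precisely where the multiplicity-freeness in Reiner's theorem for complexes (Proposition~\ref{thm:reinergen}) is indispensable, working in tandem with the disjoint-summand property of the $\calO'\otimes_\calO D_t$; without multiplicity-freeness one would only get $d\mid a_t c_{t,s}$, where $c_{t,s}$ is the multiplicity of $E_{t,s}$ in $\calO'\otimes_\calO D_t$. The remaining arguments are routine bookkeeping with the Krull--Schmidt theorem and the identification $(\calO'\otimes_\calO-)_\calO\cong(-)^{\oplus d}$.
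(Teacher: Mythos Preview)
Your proof is correct and follows the same overall strategy as the paper: restrict $C$ along $\calO'/\calO$, use \cite[Proposition~6.3]{KL18} together with the Galois-stability hypothesis to identify $\calO'\otimes_\calO C_\calO\cong C^{\oplus d}$, and then combine Proposition~\ref{thm:reinergen} with Krull--Schmidt to extract a descent $\tilde C$; the uniqueness argument via $(\calO'\otimes_\calO -)_\calO\cong(-)^{\oplus d}$ and your treatment of part~(b) are essentially identical to the paper's.

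The only notable difference is in the existence step of~(a). The paper argues directly that $C_\calO$ has exactly $d$ indecomposable summands and then takes $\tilde C$ to be any one of them; as written, that counting argument presupposes that $C$ is indecomposable (otherwise $C^{\oplus d}$ has more than $d$ indecomposable summands and the pigeonhole step breaks down). Your multiplicity-tracking argument---using the disjointness of the indecomposable summands of $\calO'\otimes_\calO D_t$ for distinct $t$ together with the multiplicity-freeness from Proposition~\ref{thm:reinergen} to force $d\mid a_t$---handles arbitrary $C$ in one pass without first reducing to the indecomposable case. This buys you a cleaner self-contained statement at the cost of slightly more bookkeeping; conversely, the paper's version is shorter once one is willing to reduce to indecomposable $C$ (which is harmless, since Galois-twisting permutes indecomposable summands and one can treat each Galois orbit separately).
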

    \begin{proof}
        For (a), let $d = [k':k]$. By \cite[Proposition 6.3]{KL18}, there is a natural isomorphism $\calO' \otimes_\calO C_\calO \cong C^{\oplus d}$. We claim $\calC_\calO$ has exactly $d$ indecomposable summands. Indeed, suppose for contradiction that $C_\calO$ contains less than $[k':k]$ indecomposable summands. Since extension of scalars is an exact functor, there exists a summand $D$ of $C_\calO$ for which $\calO'\otimes_\calO D \cong C^{\oplus i}$ for some $1 < i \leq d$ by the Krull-Schmidt theorem. However, this contradicts Proposition \ref{thm:reinergen} which asserts that $\calO'\otimes_\calO D$ decomposes into a direct sum of nonisomorphic indecomposable subcomplexes. On the other hand, $C_\calO$ cannot contain more than $d$ indecomposable summands since extension of scalars is exact. Thus, $C_\calO$ contains exactly $d$ indecomposable summands, $C_1, \dots, C_{d} \in Ch^b({}_{A}\catmod)$. We have \[\bigoplus_{i=1}^d(\calO' \otimes_\calO C_i)\cong \calO' \otimes_\calO (C_1 \oplus \cdots \oplus C_d) = \calO' \otimes_\calO C_\calO = \tau(C) \cong C^{\oplus d},\] and by the Krull-Schmidt theorem, choosing $\tilde{C} := C_i$ for any $i \in \{1,\dots, d\}$ demonstrates the first statement in (a). 
        
        For uniqueness in (a), suppose that $\tilde{C}_1, \tilde{C}_2 \in Ch^b({}_{A}\catmod)$ satisfy $C \cong \calO' \otimes_\calO \tilde{C}_1 \cong \calO' \otimes_\calO \tilde{C}_2$. Since $\calO'$ is a free $\calO$-module of rank $d$, we may take an $\calO$-basis of $\calO'$, $\{a_1,\dots, a_d\}$. Then, $(\calO' \otimes_\calO \tilde{C}_1)_\calO \cong \tilde{C}_1^{\oplus d}$, since $a_i \otimes_\calO \tilde{C}_1$ is an $A$-direct summand of $\calO'\otimes_\calO \tilde{C}_1$ for $i \in \{1, \dots, d\}$. Similarly, $(\calO' \otimes_\calO \tilde{C}_2)_\calO \cong \tilde{C}_2^{\oplus d}$, so we have $\tilde{C}_1^{\oplus d}\cong \tilde{C}_2^{\oplus d}$ and by the Krull-Schmidt theorem, $\tilde{C}_1 \cong \tilde{C}_2$, as desired. 
        
        For (b), given any $A$-module $M$ and $\sigma \in \Gal(k'/k)$, we have an isomorphism of $A'$-modules $\calO' \otimes_\calO M \cong {}^\sigma(\calO' \otimes_\calO M)$, natural in $M$, given by $a \otimes m \mapsto \sigma\inv(a) \otimes m$. The result follows by applying this isomorphism in each component of $C$.  
    \end{proof}
    
    As a result, Galois stability allows us to determine when a splendid Rickard equivalence descends to the field which realizes two splendidly Rickard equivalent blocks. 

    \begin{lemma}\label{lem:faithfulact}
        Let $k'/k$ be a finite field extension, let $G$ be a finite group, and let $b$ be a block idempotent of $kG$. Then $\Gal(k'/k)$ acts transitively on the set of block idempotents $b'$ of $k'G$ for which $b'b \neq 0$. 
    \end{lemma} 
    \begin{proof}
        We induct on the degree $d$ of $[k':k]$. If $d = 1$, then $k' = k$ and there is nothing to prove since $b$ is a block idempotent. If $d = p$, then either $b$ remains a block idempotent of $k'G$, in which case this is again vacuously true, or $b$ decomposes into at most $p$ blocks (if $b$ were to decompose into more than $p$ blocks, then since a trace sum over $\Gal(k'/k)$ yields a block of $k$, $b$ would not be primitive). If the latter occurs, then since $b$ decomposes, each new block idempotent in the decomposition must have some coefficients strictly not in $k$, and therefore any element of $\Gal(k'/k)$ acts nontrivially on the set of block idempotents. Since $|\Gal(k'/k)|= p$, the action must be faithful, so $b$ decomposes into exactly $p$ blocks in $k'G$, the set of which is acted transitively and faithfully upon by $\Gal(k'/k).$
        
        Now suppose $d > p$. Let $l$ denote the unique finite field extension of $k$ for which $[k':l] = p$. Then we have a decomposition $b = b_1 + \dots + b_n$ in $lG$, and by the inductive hypothesis, $\Gal(l/k)$ acts transitively on the set $\{b_1, \dots, b_n\}$. In particular, since the Frobenius endomorphism $x \mapsto x^{|k|}$ generates $\Gal(l/k)$, for all integral $1 \leq i, j \leq n$ there exists some integer $x\in \Z$ for which $b_i^x = b_j $. 
        
        Let \[b = c_1^1 + \dots + c_1^{a_1} + c_2^1 + \dots + c_{n-1}^{a_{n-1}}+ c_n^1 +\dots + c_n^{a_n}\] be the decomposition of $b$ into blocks of $k'G$, where $c_i^{i'}b_j \neq 0$ if and only if $i = j$. In particular, for all integers $1 \leq i \leq n$, we have a decomposition $b_i = c_i^1 + \dots + c_i^{a_i}$ (note from the base case $d = p$, $a_i$ can only be 1 or $p$). By induction from considering the extension $k'/l$, there exists some integer $\alpha$ for which $(c_i^{j_1})^\alpha = c_i^{j_2}$ for all $1 \leq i \leq n$ and $1\leq j_1, j_1 \leq a_i$. Moreover, since the Frobenius endomorphism $x \mapsto x^{|k|}$ generates $\Gal(l/k)$, every element of $\Gal(l/k)$ extends to an element of $\Gal(k'/k)$. In particular, if $b_i^x = b_j$ for some integers $1\leq i , j \leq n$, then there exist integers $i', j'$ for which $(c_i^{i'})^x = c_j^{j'}$, and we conclude that $\Gal(k'/k)$ acts transitively on the set of block idempotents $b'$ of $k'G$ for which $b'b \neq 0$. Note this also implies $a_1 = a_2 = \cdots = a_n$. 
    \end{proof}
    
    \begin{corollary}\label{cor:descent}
        Let $k'/k$ be a finite field extension and let $G, H$ be finite groups. Let $b, c$ be block idempotents of $kG$ and $kH$ respectively. There exists a splendid Rickard equivalence $X' \in Ch^b({}_{k'Gb}\triv_{k'Hc})$ which satisfies ${}^\sigma X' \cong X'$ for all $\sigma \in \Gal(k'/k)$ if and only if there exists a splendid Rickard equivalence $X\in Ch^b({}_{kGb}\triv_{kHc})$.
    \end{corollary}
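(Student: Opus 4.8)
The plan is to deduce the corollary directly from Theorem~\ref{thm:galoisstablelift}, once the two defining features of a splendid Rickard complex have been checked to be compatible with extension and restriction of scalars along $k'/k$. Recall that a complex $X \in Ch^b({}_{kGb}\triv_{kHc})$ is a splendid Rickard complex between $kGb$ and $kHc$ precisely when every term of $X$ is a $p$-permutation $(kGb,kHc)$-bimodule all of whose indecomposable summands have twisted diagonal vertices, and the canonical evaluation morphisms of complexes of bimodules $\epsilon\colon X\otimes_{kHc}X^\vee \to kGb$ and $\eta\colon X^\vee\otimes_{kGb}X \to kHc$ (where $X^\vee = \Hom_k(X,k)$) are homotopy equivalences. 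Throughout I would use Theorem~\ref{thm:galoisstablelift} and the lemmas preceding it in their form over the finite fields $k\subseteq k'$ in place of $\calO\subseteq\calO'$: the proofs go through verbatim, relying only on the $k'/k$-analogues of \cite[Proposition 6.3]{KL18} and of Proposition~\ref{thm:reinergen}.

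First I would treat the direction in which a splendid Rickard complex $X$ between $kGb$ and $kHc$ is given, and set $X' := k'\otimes_k X$. Extension of scalars is additive and exact, sends $kGb$ to $k'Gb$, commutes with the bimodule tensor products $\otimes_{kHc}$, $\otimes_{kGb}$ and with the $k$-dual (each term is finite-dimensional and $k'$ is $k$-free), and carries permutation $k[G\times H]$-modules to permutation $k'[G\times H]$-modules while preserving the vertices of indecomposable summands. Hence $X'$ again has $p$-permutation terms with twisted diagonal vertices, and applying $k'\otimes_k-$ to the natural morphisms $\epsilon$, $\eta$ produces the corresponding evaluation morphisms for $X'$, still homotopy equivalences since the functor is additive. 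So $X'$ is a splendid Rickard complex between $k'Gb$ and $k'Hc$, and ${}^\sigma X'\cong X'$ for all $\sigma\in\Gal(k'/k)$ by Theorem~\ref{thm:galoisstablelift}(b).

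Conversely, suppose a Galois-stable splendid Rickard complex $X'$ between $k'Gb$ and $k'Hc$ is given. I would apply Theorem~\ref{thm:galoisstablelift}(a) to the finite-dimensional $k$-algebra $A := kGb\otimes_k (kHc)\op$, whose extension of scalars is $A' = k'Gb\otimes_{k'}(k'Hc)\op$, to obtain a complex $X\in Ch^b({}_A\catmod)$, unique up to isomorphism, with $k'\otimes_k X\cong X'$. Each term $X_i$ is a direct summand of the restriction of scalars $(X'_i)_k\cong X_i^{\oplus d}$, and since restriction of scalars takes permutation modules to permutation modules and preserves vertices, $(X'_i)_k$ is a $p$-permutation $(kGb,kHc)$-bimodule whose summands have twisted diagonal vertices; therefore so is its summand $X_i$, and $X\in Ch^b({}_{kGb}\triv_{kHc})$.

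The remaining, and most delicate, point is to recover the homotopy equivalences for $X$, since Theorem~\ref{thm:galoisstablelift} only determines $X$ up to isomorphism of complexes, whereas being a Rickard complex is a homotopy-theoretic condition. I would argue via mapping cones: the canonical evaluation morphism $\epsilon\colon X\otimes_{kHc}X^\vee\to kGb$ is natural, so $k'\otimes_k\epsilon$ is the evaluation morphism for $X'$, which is a homotopy equivalence; hence $k'\otimes_k(\operatorname{cone}\epsilon) = \operatorname{cone}(k'\otimes_k\epsilon)$ is contractible, and it suffices to show that contractibility of $k'\otimes_k E$ forces contractibility of a bounded complex $E\in Ch^b({}_A\catmod)$. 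This holds because $E$ is contractible if and only if $\id_E$ lies in the image of the $k$-linear map $h\mapsto d_E h + h d_E$ from the graded endomorphisms of $E$ to $\End_{Ch^b}(E)$; as $k'$ is $k$-free, $k'\otimes_k-$ commutes with the formation of this image, so $\id_E$ lies in it over $k$ if and only if $\id_{k'\otimes_k E}$ lies in the corresponding image over $k'$, which holds since $k'\otimes_k E$ is contractible. The same argument applied to $\eta\colon X^\vee\otimes_{kGb}X\to kHc$ then shows $X$ is a splendid Rickard complex between $kGb$ and $kHc$. I would also remark that $b$ and $c$ need not remain primitive over $k'$, but this is harmless since one works throughout with the algebras $k'Gb$ and $k'Hc$ and never with individual blocks of $k'G$ or $k'H$.
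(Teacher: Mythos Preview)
Your argument is correct, but it follows a genuinely different route from the paper's. The paper never works directly with the field extension $k'/k$: instead it passes through the absolutely unramified $p$-modular systems $(K,\calO,k)\subseteq(K',\calO',k')$, lifts the given splendid Rickard complex from $k'$ to $\calO'$ via Rickard's unique lifting theorem \cite[Theorem 5.2]{R96} (uniqueness of the lift is what forces the lift to be Galois-stable), applies Theorem~\ref{thm:galoisstablelift} over $\calO$ to descend to $\calO$, and then invokes \cite[Proposition 4.5(a), Lemmas 5.1, 5.2]{KL18} as black boxes to certify that the descended complex is again a splendid Rickard complex; finally it reduces modulo $p$. Your approach bypasses the lift to characteristic zero entirely: you invoke the field-level version of Theorem~\ref{thm:galoisstablelift} (correctly noting that the proofs carry over verbatim), and then verify by hand the two ingredients of ``splendid Rickard'' --- the $p$-permutation and twisted-diagonal-vertex conditions on terms via restriction of scalars, and the homotopy-equivalence condition via naturality of the evaluation maps together with a faithfully-flat descent of contractibility of mapping cones. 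The paper's route is shorter on the page because the verification that splendidness and the Rickard property survive descent is outsourced to \cite{KL18}; your route is more self-contained and makes explicit exactly which features of the equivalence are being transported, at the cost of reproving special cases of those cited results. Both are valid; the only point where your write-up could be tightened is the assertion that restriction ``preserves vertices'': what you actually need, and what holds, is that restriction along $k'/k$ keeps each indecomposable summand relatively $P$-projective for the original vertex $P$, and subgroups of twisted diagonal subgroups are again twisted diagonal.
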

    \begin{proof}
        Let $(K ,\calO, k) \subseteq (K',\calO', k')$ be an extension of $p$-modular systems with $\calO, \calO'$ absolutely unramified (this exists since we may take $\calO = W(k)$ and $\calO' = W(k')$). 
        
        To prove the reverse direction, suppose $X$ exists. By unique lifting of splendid equivalences (\cite[Theorem 5.2]{R96}), there exists a splendid Rickard equivalence $Y$ of $(\calO Gb, \calO Hc)$-bimodules, unique up to isomorphism, which satisfies $k \otimes_\calO Y \cong X$. \cite[Proposition 4.5(a)]{KL18} then asserts that $Y' = \calO' \otimes_\calO Y$ induces a Rickard equivalence between $\calO' Gb$ and $\calO' Hc$ and \cite[Lemmas 5.1 and 5.2]{KL18} assert that $Y'$ is splendid. Then $X' := k' \otimes_{\calO'} Y'$ is a splendid equivalence for $k'Gb$ and $k'Hc$ and is stable under $\Gal(k'/k)$-action by Theorem \ref{thm:galoisstablelift}(b). 
        
        For the forward direction, let $X'$ be as given, and by \cite[Proposition 5.4]{BM23} we may assume $X'$ has no contractible summands. By unique lifting of splendid equivalences, there exists a splendid Rickard equivalence $Y'$ of $(\calO' Gb, \calO' Hc)$-bimodules, unique up to isomorphism, which satisfies $k' \otimes_{\calO'} Y' \cong X'$. Note $k'Gb$ and $k'Hc$ may no longer be indecomposable. Let $b = b_1+\dots+ b_m$ and $c = c_1+\dots +c_n$ denote the the decomposition of $b$ and $c$ into blocks of $k'G$ and $k'H$ respectively (using unique lifting of block idempotents for ease of notation). By \cite[Theorem 8.4]{BM23}, we have that $m = n$, and there exists a permutation $\sigma \in S_n$ such that $Y'$ decomposes into a direct sum of indecomposable splendid Rickard equivalences $Y' = Y'_1 \oplus \dots \oplus Y'_n$ with $Y_i$ a splendid Rickard equivalence between $b_i$ and $c_{\sigma(i)}$ for all $i \in \{1,\dots, n\}$. 

        Since ${}^\sigma Y' \cong Y'$ for all $\sigma \in \Gal(k'/k)$, the action of $\Gal(k'/k)$ partitions the set $\{Y'_1, \dots, Y'_n\}$ into orbits, and Lemma \ref{lem:faithfulact} implies that in fact, the action is transitive. Let $\Omega \leq \Gal(k'/k)$ denote the stabilizer of the action, and let $\calL$ (resp. $l$) denote the absolutely unramified extension $\calO' \supseteq \calL \supseteq \calO$ (resp. extension $k'/l/k$) corresponding to $\Omega$. \cite[Proposition 6.3]{KL18} gives that $(Y'_1)_\calL$ satisfies $\calO' \otimes_\calL (Y'_{1})_\calL \cong Y'_{1} \oplus \dots\oplus  Y'_{n}$, so $Y := (Y'_{1})_\calL$ is a splendid Rickard equivalence between $b \in Z(lGb)$ and $c\in Z(lHc)$. Since $X'$ was assumed to have no contractible summands, it follows that $Y$ has no contractible summands as well. Then \cite[Theorem 8.4]{BM23} implies $Y$ is indecomposable, since it follows that $b$ is a block of $lG$. 
         
        Theorem \ref{thm:galoisstablelift} asserts that there exists a unique chain complex $Y''$ of $(\calO Gb, \calO H c)$-bimodules satisfying $\calL \otimes_\calO Y'' \cong Y$. \cite[Proposotion 4.5(a)]{KL18} asserts that $Y''$ induces a Rickard equivalence between $\calO Gb$ and $\calO Hc$, and \cite[Lemma 5.2]{KL18} asserts that $Y''$ is splendid. Taking $X := k\otimes_\calO Y''$ completes the proof. 
    \end{proof}
    
    Note the previous corollary holds when $k = \F_p[b] = \F_p[c]$. We are now ready to prove Theorem \ref{thm:thmdgen}, our analogue of \cite[Theorem D]{BY22}. This theorem also strengthens \cite[Theorem 6.5]{KL18} by allowing for larger finite field extensions. 
    
    \begin{proof}[Proof of Theorem \ref{thm:thmdgen}]
        Let $k'$ be the field extension $k \supseteq k' \supseteq \F_p$ corresponding to $\stab_\Gamma(b) \leq \Gamma$ via the Galois correspondence. It follows that $k' = \F_p[b] = \F_p[c]$, and that $\Gal(k/k') = \stab_\Gamma(b) = \stab_\Gamma(c) = \stab_\Gamma(X)$. By Corollary \ref{cor:descent}, there exists a splendid Rickard equivalence $\tilde{X}$ for $k'Gb$ and $k'Hc$. The result now follows by \cite[Theorem 6.5(b)]{KL18} after lifting $\tilde{X}$ to $\calO'$, which may be done via \cite[Theorem 5.2]{R96}.
    \end{proof}

    Proving an analogue of \cite[Theorem B]{BY22} follows nearly identically to the $p$-permutation case, and of the groundwork has already been laid out via \cite[Proposition 5.13]{BY22} and \cite[Lemma 5.14]{BY22}. We summarize the setup, and refer the reader to \cite[Section 5]{BY22} for more details and proofs of the following claims.    
    
    \textbf{Setup.} 
    From here, we assume $G$ is a $p$-nilpotent group, so $G$ has a normal $p'$-subgroup $N = O_{p'}(G)$ and $G/N$ is a $p$-group. Let $(K, \calO, k)$ be a $p$-modular system large enough for $G$, and set $\Gamma := \Gal(k/\F_p).$ Fix a block idempotent $b$ of $kG$ and denote by $\tilde{b} := \tr_\Gamma(b)$ the corresponding block idempotent of $\F_p G$. Let $e$ be a block idempotent of $kN$ such that $be \neq 0$. Set $S := \stab_G(e)$, then \[b = \sum_{g\in G/S} {}^ge,\] and $e$ is also a block idempotent of $kS$.  Let $Q \in \Syl_p(S)$. Then $Q$ is a defect group of the block idempotents $e$ of $kS$, $b$ of $kG$, and $\tilde{b}$ of $\F_p G$. Set $\tilde{e} = \tr_\Gamma(e)$ and set $\tilde{S} := \stab_G(\tilde{e})$. Then by \cite[Lemma 5.1]{BY22}, $S\trianglelefteq \tilde{S}$ and \[\tilde{b} = \sum_{G/\tilde{S}} {}^g\tilde{e}.\] 
    
    Set \[e' := \sum_{\tilde{s}\in \tilde{S}/S}{}^{\tilde{s}}e.\] $e'$ is a block idempotent of $k\tilde{S}$. \cite[Lemma 5.3]{BY22} asserts $\tr_\Gamma(e') = \tilde{e}$ and $\tilde{e}$ is a block idempotent of $\F_p \tilde{S}$. Set $H := N_G(Q)$, which is a $p$-nilpotent group, and set $M := O_{p'}(H)$. Then $M = H\cap N = C_N(Q)$. Let $c$ denote the block idempotent of $kH$ which is in Brauer correspondence with $b$. It follows that $\stab_\Gamma(c) = \stab_\Gamma(b)$ and $\tilde{c} := \tr_\Gamma(c)$ is the Brauer correspondent of $\tilde{b}$. 
    
    Let $V$ denote the unique simple $kNe$ module. Since $V$ is absolutely irreducible, it extends to a simple $kSe$ module which we again denote by $V$. Let $f$ denote the block idempotent of $kM$ whose irreducible module is the Glaubermann correspondent of the $Q$-stable irreducible module $V \in {}_{kN}\catmod$. $f$ remains a block idempotent of $kT$, where $T = H \cap S$. It follows that the block idempotents $e$ of $kS$ and $f$ of $kT$ are Brauer correspondents. Let $\tilde{f} := \tr_\Gamma(f), \tilde{T} := \stab_H(\tilde{f}),$ and $f' = \tr^{\tilde{T}}_T(f).$ It follows that \[\stab_\Gamma(f') = \stab_\Gamma(b) = \stab_\Gamma(c) = \stab_\Gamma(e'),\] and $\tilde{T} = H \cap \tilde{S}$.
    
    Now, we act under the assumption that $\Res^S_Q V$ has an endosplit $p$-permutation resolution $X_V$. In fact, $\Res^S_Q V$ is a capped endopermutation module. Note that if $Q$ is abelian, this condition is satisfied, since every indecomposable endopermutation module for an abelian $p$-group is a direct summand of tensor products of inflations of Heller translates of the trivial module of quotient groups, and every indecomposable endopermutation module is absolutely indecomposable. Under this assumption, there exists a direct summand $Y_V$ of $\Ind^S_Q X_V$ such that $Y_V$ is an endosplit $p$-permutation resolution of $V$ as a $kS$-module, and we may choose $Y_V$ to be contractible-free. Set $\Delta_Q S := \{(nq, q): n\in N, q\in Q\}\leq S\times Q$. The induced chain complex $\Ind^{S\times Q}_{\Delta_Q S}Y_V$ is then a splendid Rickard equivalence between $kSe$ and $kQ$, by \cite[Theorem 7.8]{R96}. 
    
    Let $U$ be the simple $kM$-module belonging to $f$. Set $\Delta_Q T := \Delta_Q S \cap (T\times Q)$. The bimodule \[k\tilde{T}f \otimes_{kT} \Ind^{T\times Q}_{\Delta_Q T}U\] induces a splendid Morita equivalence between $k\tilde{T}f'$ and $kQ$. Altogether, the chain complex 
    \[Z := k\tilde{S}e \otimes_{kS} \Ind^{S\times Q}_{\Delta_Q S} Y_V \otimes_{kQ} \big(k\tilde{T}f \otimes_{kT}  \Ind^{T\times Q}_{\Delta_Q T}U\big)^* \] induces a splendid Rickard equivalence between $k\tilde{S}e'$ and $k\tilde{T}f'$. 
    
    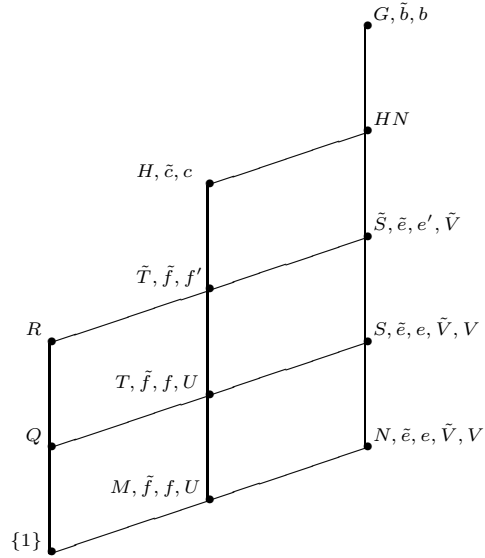
\begin{figure}
        \begin{center}
            \bigskip
            \bigskip
            \bigskip
            \bigskip
            \bigskip
            \unitlength 7mm
            {\scriptsize
                \begin{picture}(7,7)
                    \put(6,9){$\bullet$}  \put(6.2,9.2){$G, \btilde, b$}
                    \put(6,7){$\bullet$}  \put(6.2,7.2){$HN$}
                    \put(6,5){$\bullet$} \put(6.2,5.2){$\Stilde, \etilde, e', \Vtilde$}
                    \put(6,3){$\bullet$} \put(6.2,3.2){$S,\etilde, e, \Vtilde, V$} 
                    \put(6,1){$\bullet$}   \put(6.2,1.2){$N,\etilde,e, \Vtilde, V$}
                    \put(6.05,9.05){\line(0,-1){8}}
                    
                    \put(3,6){$\bullet$} \put(1.7, 6.2){$H, \ctilde, c$}
                    \put(3,4){$\bullet$} \put(1.7, 4.2){$\Ttilde, \ftilde, f'$}
                    \put(3,2){$\bullet$} \put(1.3, 2.2){$T, \ftilde, f, U$}
                    \put(3,0){$\bullet$} \put(1.2, 0.2){$M, \ftilde, f, U$}
                    \put(3.05,6.05){\line(0,-1){6}}
                    
                    \put(6.05,7.05){\line(-3,-1){3}}
                    \put(6.05,5.05){\line(-3,-1){3}}
                    \put(6.05,3.05){\line(-3,-1){3}}
                    \put(6.05,1.05){\line(-3,-1){3}}
                    
                    \put(0,3){$\bullet$} \put(-0.4, 3.2){$R$}
                    \put(0,1){$\bullet$} \put(-0.4, 1.2){$Q$}
                    \put(0,-1){$\bullet$} \put(-0.7, -0.8){$\{1\}$}
                    \put(0.05,3.05){\line(0,-1){4}}
                    
                    \put(3.05,4.05){\line(-3,-1){3}}
                    \put(3.05,2.05){\line(-3,-1){3}}
                    \put(3.05,0.05){\line(-3,-1){3}}
                    
            \end{picture}}
        \end{center}
        \bigskip
        \caption{The lattice of subgroups and blocks for a $p$-nilpotent group $G$.}
        \label{fig:subgroups}
    \end{figure}
    
    Finally, we assume there exists a $W \in {}_{\F_p Q}\catmod$ such that $\Res^S_Q V \cong k \otimes_{\F_p} W$ and that $W$ has an endosplit $p$-permutation resolution $X_W$. Then, the chain complex $k \otimes_{\F_p} X_W$ is an endosplit $p$-permutation resolution of $\Res^S_Q V$ and we may assume $X_V = k \otimes_{\F_p} X_W$. As before, if $Q$ is abelian then this property is satisfied. 
    
    The following theorem is a strengthening of \cite[Corollary 5.15]{BY22}, and the proof follows analogously. 
    \begin{theorem}\label{thm:existenceofSRC}
        Suppose that $R$ is abelian. 
        \begin{enumerate}
            \item There exists a splendid Rickard equivalence between $\F_p\tilde{S}\tilde{e}$ and $\F_p\tilde{T}\tilde{f}$.
            \item There exists a splendid Rickard equivalence between $\F_pG\tilde{b}$ and $\F_pH\tilde{c}$. 
        \end{enumerate}
    \end{theorem}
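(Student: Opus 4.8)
The plan is to first establish (a) by Galois-descending the splendid Rickard complex $Z$ of the Setup through Theorem \ref{thm:thmdgen}, and then to deduce (b) from (a) by a Fong--Reynolds gluing; this parallels the proof of \cite[Corollary 5.15]{BY22}. Observe at the outset that the hypothesis that $R$ is abelian implies that $Q$ is abelian (since $Q \leq R$), so every hypothesis made in the Setup is available.

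For (a), I would apply Theorem \ref{thm:thmdgen} with $\tilde{S}, \tilde{T}$ in place of $G, H$, the block idempotents $e'$ of $k\tilde{S}$ and $f'$ of $k\tilde{T}$ in place of $b, c$, and $Z$ in place of $X$. By the Setup, $\tr_\Gamma(e') = \tilde{e}$ and $\tr_\Gamma(f') = \tilde{f}$ are the block idempotents of $\F_p\tilde{S}$ and $\F_p\tilde{T}$ corresponding to $e'$ and $f'$, and $\stab_\Gamma(e') = \stab_\Gamma(f') = \stab_\Gamma(b)$. Thus part (a) reduces entirely to verifying the one remaining hypothesis of Theorem \ref{thm:thmdgen}, namely $\stab_\Gamma(Z) = \stab_\Gamma(e') = \stab_\Gamma(f')$; granting this, Theorem \ref{thm:thmdgen} yields a splendid Rickard equivalence between $\F_p\tilde{S}\tilde{e}$ and $\F_p\tilde{T}\tilde{f}$.

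The inclusion $\stab_\Gamma(Z) \subseteq \stab_\Gamma(e')$ is automatic, as $Z$ is a complex of $(k\tilde{S}e', k\tilde{T}f')$-bimodules, so ${}^\sigma Z$ is a complex of $(k\tilde{S}\,{}^\sigma e', k\tilde{T}\,{}^\sigma f')$-bimodules and ${}^\sigma Z \cong Z$ forces ${}^\sigma e' = e'$, ${}^\sigma f' = f'$. For the reverse inclusion I would fix $\sigma \in \stab_\Gamma(b)$ and exploit that the Galois twist commutes with induction (permutation bimodules being Galois-fixed), with tensor products over group algebras, and with $k$-duality, to obtain
\[ {}^\sigma Z \cong k\tilde{S}\,{}^\sigma e \otimes_{kS} \Ind^{S\times Q}_{\Delta_Q S}{}^\sigma Y_V \otimes_{kQ} \big( k\tilde{T}\,{}^\sigma f \otimes_{kT} \Ind^{T\times Q}_{\Delta_Q T}{}^\sigma U \big)^*. \]
Since $e' = \sum_{\tilde{s}\in\tilde{S}/S}{}^{\tilde{s}}e$ is a sum of distinct $\tilde{S}$-conjugates of $e$ and $\sigma$ fixes $e'$, the block ${}^\sigma e$ of $kN$ is $\tilde{S}$-conjugate to $e$; correcting by a Sylow argument inside $S$ one may take $\tilde{s}_\sigma \in N_{\tilde{S}}(Q)$ with ${}^\sigma e = {}^{\tilde{s}_\sigma}e$, and, $Q$ being normal in $\tilde{T}$ (indeed in $H$), one may take $\tilde{t}_\sigma \in \tilde{T}$ with ${}^\sigma f = {}^{\tilde{t}_\sigma}f$. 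Conjugation by $\tilde{s}_\sigma$ and $\tilde{t}_\sigma$ --- which normalize $N$, $M$, $Q$ and hence the twisted diagonal subgroups $\Delta_Q S$, $\Delta_Q T$ --- returns ${}^\sigma e, {}^\sigma f$ to $e, f$ while replacing ${}^\sigma Y_V, {}^\sigma U$ by ${}^{\tilde{s}_\sigma}Y_V, {}^{\tilde{t}_\sigma}U$, and one has ${}^{\tilde{s}_\sigma}Y_V \cong {}^\sigma Y_V$ since both are contractible-free endosplit $p$-permutation resolutions of the unique simple module of the block ${}^{\tilde{s}_\sigma}e = {}^\sigma e$ of $kS$, such resolutions being unique up to isomorphism, and likewise ${}^{\tilde{t}_\sigma}U \cong {}^\sigma U$ as they are simple modules of a single block of $kM$. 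Reassembling these isomorphisms should give ${}^\sigma Z \cong Z$, hence $\stab_\Gamma(Z) = \stab_\Gamma(b)$. This step is the splendid Rickard equivalence analogue of \cite[Proposition 5.13]{BY22} and \cite[Lemma 5.14]{BY22}, and I expect the main obstacle to be exactly the bookkeeping in this reassembly: organizing the conjugation isomorphisms on the outer factors $k\tilde{S}e$, $k\tilde{T}f$, on the two induced complexes, and on $Y_V$, $U$, into a single isomorphism of complexes of $(k\tilde{S}e', k\tilde{T}f')$-bimodules. It is here that the endopermutation-module and endosplit-resolution hypotheses of the Setup (available because $R$ is abelian) and the uniqueness of endosplit $p$-permutation resolutions are used.

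For (b), I would apply the Fong--Reynolds theorem to $N = O_{p'}(G) \trianglelefteq G$ and to $M = O_{p'}(H) \trianglelefteq H$ (using, in the second case, that $\tilde{f}$ is a block of $\F_pM$ with $\stab_H(\tilde{f}) = \tilde{T}$, a block of $\F_p\tilde{T}$, and $\tilde{c} = \tr^H_{\tilde{T}}(\tilde{f})$): the bimodules $\F_pG\tilde{e}$ and $\F_pH\tilde{f}$ induce splendid Morita equivalences $\F_pG\tilde{b} \simeq \F_p\tilde{S}\tilde{e}$ and $\F_pH\tilde{c} \simeq \F_p\tilde{T}\tilde{f}$ (that Fong--Reynolds Morita equivalences are splendid is standard, and is what is used at the analogous point of \cite{BY22}). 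Writing $\tilde{Z}$ for the splendid Rickard equivalence from (a), the composite
\[ \F_pG\tilde{e} \otimes_{\F_p\tilde{S}\tilde{e}} \tilde{Z} \otimes_{\F_p\tilde{T}\tilde{f}} (\F_pH\tilde{f})^* \]
is then a splendid Rickard equivalence between $\F_pG\tilde{b}$ and $\F_pH\tilde{c}$, since composing splendid Rickard equivalences yields a splendid Rickard equivalence (a Morita equivalence through a $p$-permutation bimodule being the degenerate case). This is the same gluing by which \cite[Corollary 5.15]{BY22} gives \cite[Theorem B]{BY22}, now performed with Rickard complexes, which finishes (b).
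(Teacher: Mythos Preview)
Your proposal is correct and follows the same route as the paper: apply Theorem~\ref{thm:thmdgen} to the complex $Z$ for part (a), then compose with the Fong--Reynolds splendid Morita bimodules $\F_pG\tilde{e}$ and $\F_pH\tilde{f}$ for part (b). The only difference is that you sketch an argument for the stabilizer equality $\stab_\Gamma(Z) = \stab_\Gamma(e') = \stab_\Gamma(f')$, whereas the paper simply invokes \cite[Lemma 5.14]{BY22} (together with \cite[Proposition 5.13]{BY22}) directly---that groundwork, including the delicate uniqueness of the endosplit resolution $Y_V$ which requires $R$ abelian, is already done in \cite{BY22} and need not be reproved here.
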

    \begin{proof}
        \begin{enumerate}
            \item By \cite[Lemma 5.14]{BY22}, we have $\stab_\Gamma(Z) = \stab_\Gamma(e') = \stab_\Gamma(f')$. Hence by Theorem \ref{thm:thmdgen}, there exists a splendid Rickard equivalence $\tilde{Z}$ for $\F_p\tilde{S}\tilde{e}$ and $\F_p\tilde{T}\tilde{f}.$
            \item The $p$-permutation bimodule $\F_pG\tilde{e}$ induces a splendid Morita equivalence, hence a splendid Rickard equivalence, between $\F_pG\tilde{b}$ and $\F_p\tilde{S}\tilde{e}$ see for instance \cite[Theorem 6.8.3]{L182}, by regarding the bimodule as a chain complex concentrated in degree 0. Similarly, the bimodule $\F_pH\tilde{f}$ induces a splendid Rickard equivalence between $\F_pH\tilde{c}$ and $\F_p\tilde{T}\tilde{f}$ in the same way. The result now follows from (a) via the splendid Rickard equivalence \[\F_pG\tilde{e} \otimes_{\F_p\tilde{S}\tilde{e}} \tilde{Z} \otimes_{\F_p\tilde{T}\tilde{f}} \F_pH\tilde{f}\] of $(\F_pG\tilde{b}, \F_pH\tilde{c})$-bimodules.
        \end{enumerate}
        
    \end{proof}
    
    Theorem \ref{thm:thmbgen} now follows immediately from the previous theorem.

    \bibliography{bib}
    \bibliographystyle{plain}
    
\end{document}